\newtheorem{theorem}{Theorem}[section]
\newtheorem{proposition}[theorem]{Proposition}
\newtheorem{corollary}[theorem]{Corollary}
\newtheorem{example}[theorem]{Example}
\newtheorem{remark}[theorem]{Remark}
\newtheorem{lemma}[theorem]{Lemma}
\newtheorem{question}[theorem]{Question}
\DeclareMathOperator{\conv}{conv}
\begin{document}

\title[Local compactness in right bounded asymmetric  spaces]{Local compactness in right bounded asymmetric normed spaces}

\author{N. Jonard-P\'erez and  E.A. S\'anchez-P\'erez}

\subjclass[2010]{46A50, 46A55, 46B50, 52A07, 52A10}

\keywords{Asymmetric norm, Right bounded, Compactness,  Local compactness, Convex set}

\thanks{The first author has been supported by Conacyt grant 252849 (M\'exico) and by  PAPIIT grant IA104816  (UNAM, M\'exico).
The second author has been supported by Ministerio de  Econom\'{\i}a y Competitividad (Spain) (project MTM2016-77054-C2-1-P)}

\maketitle

\begin{abstract}
 We characterize  the finite dimensional asymmetric normed spaces which are right bounded and the relation of this property with the natural  compactness properties  of the unit ball, as compactness and strong compactness. In contrast with some results found in the existing literature, we show that not all right bounded asymmetric norms have compact closed balls. We also prove that there are finite dimensional asymmetric normed spaces that satisfy that the closed unit ball is compact, but not strongly compact, closing in this way an open question on the topology of finite dimensional asymmetric normed spaces.  In the positive direction, we will prove that a finite dimensional asymmetric normed space is strongly locally compact if and only if it is right bounded.
\end{abstract}

\section{Introduction}

It is well known that a normed vector space is locally compact if and only if it is finite dimensional.
However, in the asymmetric case this is no longer true.
In the context of asymmetric normed spaces, this matter is related to the relevant notion of right boundedness that has been widely used (see e.g. \cite{todos, Conradie 2}; see Section \S \ref{Sec3} for the definition). Right bounded asymmetric normed spaces were introduced in \cite[Definition 16]{luis}. In that same paper it was stated that the unitary closed ball of a right bounded asymmetric normed space is always compact (\cite[Proposition 17]{luis}). However,  we will show in Example~\ref{e:main 1} that ---except if the constant $r$ in the definition of right boundedness is $1$--- this result is not true.
Therefore,  it is natural to ask if it is possible to give a characterization of right boundedness for finite dimensional asymmetric normed spaces in terms of a weaker compactness-type property: local compactness.  Recall that a topological space $X$ is locally compact iff every point $x\in X$ has a local base consisting of compact neighborhoods.

The aim of this work is to solve that problem. Another relevant compactness property that can also be found in the literature ---strong compactness--, and its local version will be considered. As part of our main result, we will prove that  right boundedness and strong local compactness are equivalent notions in the class of asymmetric normed spaces. However, we will also show that there are finite dimensional asymmetric normed spaces of dimension $3$
such that the unitary closed ball is compact and right bounded, but not strongly compact (see Example \ref{inf}). 
 We will also prove other equivalences related with the geometry and the topology of the asymmetric unitary closed balls.

\section{Preliminaries}

Consider a real linear space $X$ and let $\mathbb R^+$ be the set of non-negative real numbers. An \textit{asymmetric norm} $q$ on $X$ is a function  $q:X \rightarrow \mathbb R^+$ such that
\begin{enumerate}[\rm(1)]
\item $q(ax)=aq(x)$ for every $x \in X$ and $a \in \mathbb R^+$,
\item $q(x+y) \le q(x) + q(y)$, $x,y \in X$, and
\item for every $x \in X$, if $q(x)=q(-x)=0$, then $x=0$.
\end{enumerate}

The pair $(X,q)$ is called an \textit{asymmetric normed linear space}.
 An asymmetric norm defines a non-symmetric  topology on $X$ that is given by the open balls
$B_\varepsilon^q(x):=\{y \in X: \, q(y-x) < \epsilon\}$. This
topology is denoted by $\tau_q$.

We will denote by $\theta _q$ the set of all $x\in X$ such that $q(x)=0$. The set $\theta_q$ is a convex cone; this means that $\mu x \in \theta_q$ and $x+y\in\theta_q$ for every $x,y\in\theta_q$ and $\mu\geq 0$. This set
 plays a fundamental role in many topological, geometric and analytic  results about asymmetric normed linear spaces. In particular, $(X,q)$ is $T_1$ if and only if $\theta_q=\{0\}.$

The following are well known results concerning the set $\theta_q$ (see \cite{luis} for the proofs) that will be needed in the paper.

\begin{itemize}
\item[(O1)] For any open set $U\subset X$, we always have that $U=U+\theta_q$.
\item [(O2)] $K\subset X$ is compact if and only if $K+\theta_q$ is compact.
\end{itemize}

For every asymmetric normed space $(X, q)$
 the map $q^{s}:X\to \mathbb R^{+}$ defined by the rule 
$$
q^s(x):= \max \{q(x),q(-x)\}, \quad x \in X,
$$
is a norm that generates a topology stronger than the one generated by $q$. We will use the symbols $B_\varepsilon^q$ and $B_\varepsilon^{q^s}$ to distinguish the balls of $(X,q)$ and $(X,q^s)$, respectively. More precisely, for every $x\in X$ and $\varepsilon >0$ we will use the following notations
\begin{align*}
&B_\varepsilon^q(x)=\{y\in X\mid q(y-x)<\varepsilon\},\\
&B_\varepsilon^q[x]=\{y\in X\mid q(y-x)\leq\varepsilon\},\\
&B_\varepsilon^{q^s}(x)=\{y\in X\mid q^s(y-x)<\varepsilon\},\\
&B_\varepsilon^{q^s}[x]=\{y\in X\mid q^s(y-x)\leq\varepsilon\}.\\
\end{align*}

The set $B_\varepsilon^q[x]$ is called the \textit{closed ball} of radius $\varepsilon$ centered at $x$. However, in general this set is not closed with respect to $\tau_q$.

 In order to avoid confusion, when necessary, we will say that a set is \textit{$q$-compact} ($q^{s}$-compact) if it is compact in the topology generated by $q$ ($q^{s}$).  We will use the expressions \textit{$q$-open} and \textit{$q$-closed} sets (\textit{$q^{s}$-open} and \textit{$q^{s}$-closed} sets)  in the same way.

A subset $K\subset X$ in an asymmetric normed space $(X,q)$ is \textit{strongly $q$-compact} (or simply, \textit{strongly compact}) if and only if there exists a $q^s$-compact set $S\subset X$  such that
$$
S\subset K\subset S+\theta_q.
$$
Every strongly compact set is $q$-compact, but the converse is not true (see e.g. \cite{todos, new, Jonard Sanchez 2}).
If each point of the asymmetric normed space $X$ has a local base consisting of strongly compact sets we will say that $X$ is \textit{strongly locally compact}. Evidently, if $X$ is strongly locally compact then it is also locally compact.

Since $q(x)\leq q^s(x)$ for every $x\in X$, we always have the following contentions
\begin{equation}\label{e:theta balls}
B_\varepsilon^{q^s}(x)+\theta_q\subset B_\varepsilon^q(x)\quad\text{ and }\quad
B_\varepsilon^{q^s}[x]+\theta_q\subset B_\varepsilon^q[x].
\end{equation}

Addition  in asymmetric normed spaces is always continuous but the scalar multiplication is not (see e.g. \cite{cobzas}). However it is well-known that multiplication by non negative scalars is continuous. For the convenience of the reader, we include the proof of this result that will be used often in the paper. 

\begin{lemma}\label{l:scalar multiplication continuous}
Let $(X,q)$ be an asymmetric normed space. The map $\mathbb R^+\times X$ given by $( \mu, x)\to \mu x$ is continuous if we endow $\mathbb R^+$ with the Euclidean topology.
\end{lemma}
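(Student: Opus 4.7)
The plan is to prove continuity at an arbitrary base point $(\mu_0,x_0)\in\mathbb R^+\times X$ directly from the definitions, using the triangle inequality together with positive homogeneity to split the increment into two controllable pieces. Given $\varepsilon>0$, I will look for $\delta_1,\delta_2>0$ such that whenever $|\mu-\mu_0|<\delta_1$ and $q(x-x_0)<\delta_2$ (so that $x\in B^q_{\delta_2}(x_0)$) we get $\mu x\in B^q_\varepsilon(\mu_0 x_0)$.

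The key algebraic step is the decomposition
\begin{equation*}
q(\mu x-\mu_0 x_0)\le q\bigl(\mu(x-x_0)\bigr)+q\bigl((\mu-\mu_0)x_0\bigr).
\end{equation*}
For the first summand, since $\mu\ge 0$ we get $q(\mu(x-x_0))=\mu\,q(x-x_0)$ directly from property (1) of asymmetric norms. The second summand is the delicate one: property (1) only lets us pull out non-negative scalars, so I will split into the cases $\mu\ge\mu_0$ and $\mu<\mu_0$. In the first case $q((\mu-\mu_0)x_0)=(\mu-\mu_0)q(x_0)$; in the second, $q((\mu-\mu_0)x_0)=(\mu_0-\mu)q(-x_0)$. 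Either way this is bounded by $|\mu-\mu_0|\,q^s(x_0)$, yielding the combined estimate
\begin{equation*}
q(\mu x-\mu_0 x_0)\le \mu\,q(x-x_0)+|\mu-\mu_0|\,q^s(x_0)\le (\mu_0+\delta_1)\,\delta_2+\delta_1\,q^s(x_0).
\end{equation*}

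Now it is a routine matter to pick $\delta_1$ first so that $\delta_1\,q^s(x_0)<\varepsilon/2$, and then $\delta_2$ so that $(\mu_0+\delta_1)\,\delta_2<\varepsilon/2$; this proves continuity at $(\mu_0,x_0)$. The only genuine subtlety, and the place where one must be careful, is precisely the second summand: the asymmetric norm is only positively homogeneous, so a negative increment $\mu-\mu_0$ forces $q(-x_0)$ (and therefore the symmetric norm $q^s(x_0)$) into the bound. Since $q^s(x_0)$ is always finite, this causes no problem, but it is the reason the statement concerns multiplication by non-negative scalars only---for negative scalars one would need continuity of $x\mapsto -x$, which in the asymmetric setting generally fails.
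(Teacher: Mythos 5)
Your proof is correct and follows essentially the same route as the paper: the same decomposition $q(\mu x-\mu_0x_0)\le q(\mu(x-x_0))+q((\mu-\mu_0)x_0)$, the same bound of the second term by $|\mu-\mu_0|\,q^s(x_0)$ (the paper gets this by passing to $q\le q^s$ and using homogeneity of the norm $q^s$, where you make the underlying case split explicit), and the same choice of $\delta_1$ before $\delta_2$. Nothing is missing.
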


\begin{proof}
Let $\varepsilon>0$ and $(\mu,x)\in \mathbb R^+\times X $. Then there exists $\delta_1>0$ such that $\delta_1 q^{s}(x)< \varepsilon/2$.
Since $\mu\geq 0$, if we define $\delta_2:=\varepsilon /2(\mu+\delta_1)$ we have that $\delta_2>0$.
Now take $\lambda \geq 0$ such that $|\lambda-\mu|<\delta_1$ and $y\in B_{\delta_2}^q(x)$. Thus
\begin{align*}
q(\lambda y-\mu x)&=q(\lambda y-\lambda x+\lambda x-\mu x)\\
&\leq q(\lambda y-\lambda x)+q(\lambda x-\mu x)\\
&\leq\lambda q(y-x)+q^s(\lambda x-\mu x)\\
&<(\mu+\delta_1)\frac{\varepsilon}{2(\mu+\delta_1)}+|\lambda-\mu |q^s(x)\\
&<\frac{\varepsilon}{2}+\delta_1 q^s(x)<\frac{\varepsilon}{2} + \frac{\varepsilon}{2} =\varepsilon.
\end{align*}
This proves the lemma.
\end{proof}

\subsection{Equivalent and right bounded asymmetric norms} \label{Sec3}

Let us show some basic facts on the relation among equivalent asymmetric norms and the right boundedness property for the corresponding spaces that will be used later on. Some of the proofs are straightforward, so  we will  write only some hints for getting them.

Let $X$ be a real vector space.  Two asymmetric norms in $X$,  $q$ and $p$, are said to be equivalent if and only if there exist $\kappa>0$ and $\lambda>0$ such that
$$
 \kappa q(x)\le p(x)\le \lambda  q(x)\quad\text{ for all } \,\, x\in X,
$$
(or equivalently,  if and only if $\lambda B_1^q[0]\subset B_1^p[0] \subset \kappa B_1^q[0]$). Obviously, two equivalent asymmetric norms in $X$ generate the same topology. For the proof of the next result it is enough to take into account that for equivalent $q$ and $p$, $q(x)=0$ if and only if $p(x)=0.$

\begin{lemma}\label{l:thetas iguales}
Let $q$ and $p$ be equivalent asymmetric norms in a vector space $X$. Then $\theta_p=\theta_q$.
\end{lemma}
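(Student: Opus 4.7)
The plan is to prove this directly from the definitions, since the statement reduces to observing that the defining inequality for equivalent asymmetric norms sandwiches the value $0$ between two copies of itself.

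First I would unpack the definition: by hypothesis there exist constants $\kappa,\lambda>0$ with $\kappa\, q(x)\le p(x)\le \lambda\, q(x)$ for every $x\in X$, and the sets $\theta_q,\theta_p$ are by definition the zero-sets of $q$ and $p$ respectively. The key observation is that each asymmetric norm takes only non-negative values, so the double inequality forces $p(x)=0$ whenever $q(x)=0$ (from the right inequality) and $q(x)=0$ whenever $p(x)=0$ (from the left inequality, using $\kappa>0$).

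From there the conclusion is immediate: if $x\in\theta_q$ then $q(x)=0$, hence $0\le p(x)\le \lambda q(x)=0$, so $x\in\theta_p$; conversely if $x\in\theta_p$ then $p(x)=0$, and $\kappa q(x)\le p(x)=0$ together with $q(x)\ge 0$ gives $q(x)=0$, i.e.\ $x\in\theta_q$. Hence $\theta_p=\theta_q$.

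There is really no obstacle here; the statement is essentially a definitional unpacking, and the author's remark preceding the lemma (\textit{``for equivalent $q$ and $p$, $q(x)=0$ if and only if $p(x)=0$''}) already contains the whole content of the argument. The only subtle point worth flagging is that one must use the non-negativity of $p$ and $q$ (so that the inequalities really do pin the value to $0$); without non-negativity the chain of inequalities alone would not suffice.
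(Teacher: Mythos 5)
Your proof is correct and follows exactly the route the paper intends: the paper gives no formal proof but notes immediately before the lemma that for equivalent $q$ and $p$ one has $q(x)=0$ if and only if $p(x)=0$, which is precisely the definitional unpacking you carry out. Nothing is missing.
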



An asymmetric norm $q$ in a vector space $X$ is said to be \textit{right bounded} if and only if there exists $r>0$ such that
$$
rB_1^q[0]\subset B_1^{q^s}[0]+\theta_q.
$$
In this case we also say that $B_1^q[0]$ is right bounded.
In the particular case when $r=1$, we will simply say that $q$ is 1-bounded.
Using (\ref{e:theta balls}) we infer the following

\begin{remark}\label{r: 1 bounded iff balls equals}
Let $(X,q)$ be an asymmetric normed space. The norm
  $q$ is 1-bounded if and only if $B_1^q[0]= B_1^{q^s}[0]+\theta_p$.
\end{remark}

\begin{lemma}\label{l: right bounded iff bounded set}
An asymmetric normed space $(X,q)$ is right bounded if and only if there exists a $q^s$-bounded set $K$ such that
$$B_1^q[0]\subset K+\theta_q$$
\end{lemma}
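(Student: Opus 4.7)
The plan is to prove the two implications by direct manipulation of the defining inclusions, using crucially the fact that $\theta_q$ is a convex cone, so $\mu \theta_q = \theta_q$ for every $\mu > 0$.

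For the forward direction, I will start from the definition: assume there is $r > 0$ with $rB_1^q[0] \subset B_1^{q^s}[0] + \theta_q$. Dividing the inclusion by $r$ gives
\[
B_1^q[0] \subset \frac{1}{r} B_1^{q^s}[0] + \frac{1}{r}\theta_q = B_{1/r}^{q^s}[0] + \theta_q,
\]
where in the last equality I use that $\theta_q$ is a cone and $1/r > 0$. Taking $K := B_{1/r}^{q^s}[0]$, which is visibly $q^s$-bounded, finishes this implication.

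For the backward direction, I will start with a $q^s$-bounded set $K$ satisfying $B_1^q[0] \subset K + \theta_q$. By definition of $q^s$-boundedness, there exists $M > 0$ such that $K \subset B_M^{q^s}[0] = M B_1^{q^s}[0]$. Then
\[
B_1^q[0] \subset M B_1^{q^s}[0] + \theta_q,
\]
so dividing by $M$ yields $\frac{1}{M} B_1^q[0] \subset B_1^{q^s}[0] + \frac{1}{M}\theta_q = B_1^{q^s}[0] + \theta_q$, using once more that $\theta_q$ is a cone. Thus $r := 1/M > 0$ witnesses right boundedness.

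No genuine obstacle arises here; the statement is essentially a reformulation of the definition. The only subtle point to make explicit is the cone property $\mu \theta_q = \theta_q$ for $\mu > 0$, which has already been recorded in the preliminaries, and the elementary identity $\mu B_1^{q^s}[0] = B_\mu^{q^s}[0]$ coming from positive homogeneity of the norm $q^s$.
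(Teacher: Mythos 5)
Your proof is correct and follows essentially the same route as the paper's: in both directions you rescale the defining inclusion and absorb the positive scalar into $\theta_q$ using the cone property, taking $K=\frac{1}{r}B_1^{q^s}[0]$ in one direction and enclosing $K$ in a ball $MB_1^{q^s}[0]$ in the other. No gaps; the explicit mention of $\mu\theta_q=\theta_q$ for $\mu>0$ is exactly the point the paper also relies on.
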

\begin{proof}
If $(X,q)$ is right bounded, we can find $r>0$ such that $rB_1^q[0]\subset B^{q^s}_1[0]+\theta_q$. Then the set $K:=\frac{1}{r}B^{q^s}_1[0]$ is $q^s$-bounded and
$$B_1^q[0]\subset K+1/r\theta_q=K+\theta_q.$$
For the second implication, let $h>0$ be such that $K\subset h B_1^{q^s}[0]$.
Then
$$B_1^q[0]\subset K+\theta_q\subset h B_1^{q^s}[0]+\theta_q=h B_1^{q^s}[0]+h\theta_q.$$
Thus $1/hB_1^q[0]\subset B^{q^s}+\theta_q$, which proves that $q$ is right bounded.
\end{proof}

\begin{lemma}\label{l:p right bounded q too}
Let $q$ and $p$ be equivalent asymmetric norms in a finite dimensional  vector space $X$. Then $q$ is right bounded if and only if $p$ is right bounded.
\end{lemma}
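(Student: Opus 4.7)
The plan is to reduce the question to the characterization of right boundedness via $q^s$-bounded sets that is proved in Lemma~\ref{l: right bounded iff bounded set}, and then transfer such a set from one norm to the other by using the equivalence. First I would record the auxiliary fact that if $\kappa q(x)\le p(x)\le \lambda q(x)$ for all $x\in X$, then the associated symmetric norms also satisfy
\[
\kappa\, q^s(x)\le p^s(x)\le \lambda\, q^s(x),\qquad x\in X,
\]
which follows immediately by taking the maximum over $\{x,-x\}$ on both sides. In particular, a subset of $X$ is $q^s$-bounded if and only if it is $p^s$-bounded.

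Next, assume that $q$ is right bounded. By Lemma~\ref{l: right bounded iff bounded set}, there exists a $q^s$-bounded set $K\subset X$ with $B_1^q[0]\subset K+\theta_q$. From the estimate $\kappa q(x)\le p(x)$ we get that $p(x)\le 1$ implies $q(x)\le 1/\kappa$, i.e.\ $B_1^p[0]\subset \tfrac{1}{\kappa}B_1^q[0]$. Combining this with $\theta_p=\theta_q$ (Lemma~\ref{l:thetas iguales}) and the fact that $\theta_q$ is a cone, I obtain
\[
B_1^p[0]\subset \tfrac{1}{\kappa}B_1^q[0]\subset \tfrac{1}{\kappa}K+\tfrac{1}{\kappa}\theta_q=\tfrac{1}{\kappa}K+\theta_p.
\]
Since $\tfrac{1}{\kappa}K$ is $q^s$-bounded, hence $p^s$-bounded by the first paragraph, another application of Lemma~\ref{l: right bounded iff bounded set} shows that $p$ is right bounded. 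The reverse implication is obtained by exchanging the roles of $q$ and $p$ (the relation of equivalence is symmetric).

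There is no real obstacle here: the argument is a straightforward bookkeeping of balls, using only the equivalence of $q^s$ and $p^s$, the coincidence of the cones $\theta_p=\theta_q$, and the characterization of right boundedness given in Lemma~\ref{l: right bounded iff bounded set}. Note that the argument as described actually does not use finite dimensionality; the hypothesis is presumably included because finite dimensionality is the standing context of the paper's main results, or because it is needed in a slightly different presentation of the same idea (e.g.\ invoking compactness of $K$ to produce a $q^s$-compact rather than merely $q^s$-bounded witness).
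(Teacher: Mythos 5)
Your proof is correct, and the core of it---transferring the inclusion that defines right boundedness from $q$ to $p$ via a chain of ball inclusions and the identity $\theta_p=\theta_q$---is the same as the paper's. There are two small but genuine differences worth noting. First, you route the argument through Lemma~\ref{l: right bounded iff bounded set} (producing a $q^s$-bounded, hence $p^s$-bounded, witness set $\tfrac{1}{\kappa}K$), whereas the paper manipulates the defining inclusion $r'B_1^q[0]\subset B_1^{q^s}[0]+\theta_q$ directly and ends with an explicit constant $r=r'\lambda/\kappa$; this is only a cosmetic difference. Second, and more substantively, the paper obtains the equivalence of $q^s$ and $p^s$ by invoking finite dimensionality (all norms on a finite dimensional space being equivalent), while you observe that $\kappa q\le p\le\lambda q$ already forces $\kappa q^s\le p^s\le\lambda q^s$ by taking maxima over $\{x,-x\}$ --- a correct computation, using that $q^s(x)=q^s(-x)$. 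As a consequence your argument is valid without the finite-dimensionality hypothesis, which the paper's proof, as written, is not. Your closing remark is therefore accurate: the hypothesis in the statement is an artifact of the paper's standing context (and of its choice to appeal to norm equivalence in finite dimensions) rather than a genuine requirement of the lemma.
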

\begin{proof}
Let us assume that $q$ is right bounded.
Since $X$ is finite dimensional, the norms $q^s$ and $p^s$ are equivalent. Then there exists a positive number $\kappa$ such that $B_1^{q^s}[0]\subset \kappa B_1^{p^s}[0]$.
On the other hand, since $q$ is right bounded there exists  $r'>0$ with the property that $r'B_1^q[0]\subset B^{q^s}_1[0]+\theta_q$.
Finally, since $p$ and $q$ are equivalent, we can find $\lambda>0$ such that $\lambda B_1^p[0]\subset B_1^q[0]$. Now we can use all previous contentions and Lemma~\ref{l:thetas iguales} to conclude that
\begin{align*}
r'\lambda B_1^p[0]&\subset r'B_1^q[0]\subset B_1^{q^s}[0]+\theta_q\\
&\subset  \kappa B_1^{p^s}[0]+\theta_q=\kappa B_1^{p^s}[0]+\theta_p\\
&=\kappa B_1^{p^s}[0]+\kappa\theta_p=\kappa\big(B_1^{p^s}[0]+\theta_p\big).
\end{align*}
Then we infer that $rB_1^p[0]\subset B_1^{p^s}[0]+\theta_p$ where $r=r'\lambda /\kappa$. The other implication can be proven in a similar way.
\end{proof}

\subsection{The canonical $1$-bounded equivalent asymmetric norm}

In  \cite{Conradie 2}  Conradie introduced a new asymmetric norm associated to an asymmetric normed space $(X,p)$ in the following way.  For every $x\in X$ let us define
$$
q_p(x)=\inf \{p^s(x-y)\mid y\in \theta_p\}.
$$
 It was proven in \cite{Conradie 2} that  $q_p:X\to [0,\infty)$ is an asymmetric norm satisfying the following properties:
\begin{itemize}
\item[(p1)] $p(x)\leq q_p(x)$ for all $x\in X$,
\item [(p2)]  $B_1^{q_p}[0]\subset B_1^{p}[0]$,
\item [(p3)]$B_1^{q_p}[0]=B_1^{p^s}[0]+\theta_p$,
\item [(p4)] $p$ is equivalent to $q_p$ if and only if $p$ is right bounded.
\end{itemize}

In the next lemma we prove some other fundamental properties about the norm $q_p$ that will be used later
on.

\begin{lemma} Let $(X,p)$ be an asymmetric normed space. Then the following statements hold
\begin{itemize}
\item [(p5)] $\theta_p=\theta_{q_p}$ (equivalently, $p(x)=0$ if and only if $q_p(x)=0$).
\item [(p6)] $p^s=q_p^s$  (in particular $B_1^{p^s}[0]=B_1^{q_p^s}[0]$).
\item [(p7)] $q_{p}$ is 1-bounded.
\item [(p8)] $B_1^{q_p}[0]=B_1^p[0]$ if and only if $p$ is 1-bounded.
\end{itemize}
\end{lemma}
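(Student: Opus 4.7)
The plan is to derive all four items directly from the three properties (p1), (p2), (p3) of the Conradie norm already listed (together with the definition of $\theta$ and Remark~\ref{r: 1 bounded iff balls equals}), in the order (p5)$\to$(p6)$\to$(p7)$\to$(p8), since each later item uses the previous ones.

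For (p5), I would show the two containments separately. For $\theta_p\subset\theta_{q_p}$, given $x\in\theta_p$ I simply plug $y=x$ into the defining infimum of $q_p(x)$; since $x\in\theta_p$ is admissible, we get $q_p(x)\le p^s(x-x)=0$. The reverse containment is even shorter: if $q_p(x)=0$, then by (p1) we have $p(x)\le q_p(x)=0$, so $x\in\theta_p$.

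For (p6), one inequality comes from (p1) applied to both $x$ and $-x$, giving $q_p^s\ge p^s$. For the reverse inequality, taking $y=0\in\theta_p$ in the infimum defining $q_p$ yields $q_p(x)\le p^s(x)$, and the same for $-x$; taking the maximum gives $q_p^s\le p^s$. (This is the routine step I would not belabor.)

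For (p7), the strategy is to read Remark~\ref{r: 1 bounded iff balls equals} backwards. Starting from the identity in (p3), I substitute $B_1^{p^s}[0]=B_1^{q_p^s}[0]$ from (p6) and $\theta_p=\theta_{q_p}$ from (p5) to rewrite it as $B_1^{q_p}[0]=B_1^{q_p^s}[0]+\theta_{q_p}$, which by the remark is exactly the 1-boundedness of $q_p$.

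For (p8), both directions are immediate by combining (p3) with the remark: $B_1^p[0]=B_1^{q_p}[0]$ is equivalent (via (p3)) to $B_1^p[0]=B_1^{p^s}[0]+\theta_p$, and this in turn is (by Remark~\ref{r: 1 bounded iff balls equals}) equivalent to $p$ being 1-bounded. I do not anticipate any genuine obstacle here; the only mild care needed is to be sure that (p5) and (p6) are established before they are invoked in (p7), which is why I arrange the items in the order above rather than the order stated in the lemma.
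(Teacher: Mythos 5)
Your proposal is correct and follows essentially the same route as the paper: both containments in (p5) via the choice $y=x$ in the infimum and via (p1), both inequalities in (p6) via (p1) and the choice $y=0$, and (p7)--(p8) by combining (p3) with (p5), (p6) and Remark~\ref{r: 1 bounded iff balls equals}. No gaps.
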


\begin{proof}

(p5) The inclusion $\theta_{q_p}\subset \theta_p$ follows from property (p1). Now, if $x\in \theta_p$, we can use the definition of $q_p$ to infer that
$$
0\leq q_p(x)\leq p^s(x-x)=p^s(0)=0.
$$
Then $q_p(x)=0$ and therefore $x\in \theta_{q_p}$.

(p6) Let $x\in X$.  By property (p1), we have that $q_p(x)\geq p(x)$ and $q_p(-x)\geq p(-x)$. Thus
\begin{align*}
q_p^s(x)=\max\{q_p(x), q_p(-x)\}\geq\max\{p(x), p(-x)\}=p^s(x).
\end{align*}
On the other hand, the definition of $q_p$ guarantees that $p^s(x)=p^s(x-0)\geq q_p(x)$ and $p^s(-x)=p^s(-x-0)\geq q_p(-x)$.  Since $p^s$ is a norm, $p^s(x)=p^s(-x)$ and therefore
$$ p^s(x)\geq\max\{q_p(x), q_p(-x)\}=q_p^s(x).$$
Then we get $p^s(x)=q_p^s(x)$, as desired.

(p7) Using Remark~\ref{r: 1 bounded iff balls equals} and properties (p3), (p5) and (p6) we get that
$$B_1^{q_p}[0]=B_1^{p^s}[0]+\theta_p=B_1^{q_p^s}[0]+\theta_{q_p}.$$
Then $q_p$ is 1-bounded.

(p8) This is a direct consequence of Remark~\ref{r: 1 bounded iff balls equals} and property (p3).

\end{proof}

\section{Convexity and compactness}

In this section we will show some results relating to compactness properties of subsets of asymmetric normed spaces, mainly for convex sets. The notion of local compactness arises in a natural way in the characterizations, which are given in terms of an equivalent asymmetric norm  satisfying some special properties. 

It is well known that in finite dimensional normed spaces, the convex hull of a compact set is compact. In the following lemma we show the asymmetric version of this result.

\begin{lemma}\label{l:convex hull compact}
Let $A\subset X$ be a  $q$-compact set in the finite dimensional asymmetric normed space $(X,q)$. Then the convex hull $\conv (A)$ is also $q$-compact.
\end{lemma}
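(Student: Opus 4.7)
The plan is to imitate the classical proof via Carathéodory's theorem, checking carefully that every map involved uses only non-negative scalar multiplication, so that the lack of global continuity of scalar multiplication in the asymmetric setting is not an obstacle.

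Let $n=\dim X$. By Carathéodory's theorem (which is purely linear-algebraic and holds in any finite dimensional real vector space), every point of $\conv(A)$ can be written as a convex combination of at most $n+1$ points of $A$. Let $\Delta_n$ denote the standard simplex
$$\Delta_n=\Bigl\{(\lambda_0,\dots,\lambda_n)\in (\mathbb R^+)^{n+1}\ \Big|\ \sum_{i=0}^{n}\lambda_i=1\Bigr\}$$
with its Euclidean topology, and consider the map
$$\varphi:\Delta_n\times A^{n+1}\to X,\qquad \varphi\bigl((\lambda_0,\dots,\lambda_n),(a_0,\dots,a_n)\bigr)=\sum_{i=0}^{n}\lambda_i a_i.$$
By Carathéodory, the image of $\varphi$ is exactly $\conv(A)$, so it suffices to prove that $\Delta_n\times A^{n+1}$ is compact and that $\varphi$ is continuous into $(X,\tau_q)$.

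For the domain: $\Delta_n$ is a compact subset of $\mathbb R^{n+1}$ with the Euclidean topology, and $A^{n+1}$ is $q$-compact in the product topology because compactness is preserved by finite products in arbitrary topological spaces (no separation axiom is needed). Hence $\Delta_n\times A^{n+1}$ is compact.

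For the continuity of $\varphi$: since all coordinates $\lambda_i$ lie in $\mathbb R^+$, each partial map $(\lambda_i,a_i)\mapsto \lambda_i a_i$ is continuous by Lemma~\ref{l:scalar multiplication continuous}. Addition in $(X,q)$ is continuous (as recalled in the preliminaries), and a finite sum of continuous maps into $X$ is continuous. Thus $\varphi$ is continuous, and $\conv(A)=\varphi(\Delta_n\times A^{n+1})$ is the continuous image of a compact set, hence $q$-compact.

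The only conceivable obstacle would be the infamous discontinuity of scalar multiplication in asymmetric normed spaces, but this is sidestepped here because convex combinations only require multiplication by non-negative scalars, and multiplication $\mathbb R^+\times X\to X$ is continuous; no other step in the argument is sensitive to the asymmetric character of $q$.
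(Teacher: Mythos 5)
Your proof is correct and follows essentially the same route as the paper: Carath\'eodory's theorem to write $\conv(A)$ as the image of $\Delta_n\times A^{n+1}$ under the evaluation map, together with continuity of addition and of non-negative scalar multiplication (Lemma~\ref{l:scalar multiplication continuous}). The additional remarks on why the asymmetric discontinuity of scalar multiplication is not an obstacle are a nice touch but do not change the argument.
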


\begin{proof}
Let $n$ be the (algebraic) dimension of $X$.  We apply  Carath\'eodory's Theorem (\cite[Theorem 2.2.4]{Webster}) to infer that
$$\conv (A)=\left\{\sum_{i=0}^n\lambda_i a_i : \, \, a_i \in A, \, \lambda_i\geq 0 \; \text{ and }\sum_{i=0}^n\lambda_i=1 \right \}.$$
On the other hand, the topological product  $A^{n+1}$ is also compact since $A$ is compact. Now, let us consider the $n$-dimensional simplex
$$\Delta=\left \{(\lambda_0,\dots, \lambda_n)\in\mathbb R^{n+1}\mid \lambda_i\geq 0\text{ and }\sum_{i=0}^n\lambda_i=1 \right \}.$$
If we equip $\Delta$ with the Euclidean topology, it becomes a compact space.
Now, let $\varphi: A^{n+1}\times \Delta\to \conv (A)$ be the natural map defined by
$$\varphi (a_0,\dots, a_{n},\lambda_0,\dots, \lambda_n)=\sum_{i=0}^n\lambda_i a_i.$$
Clearly $\varphi$ is an onto function.  Now observe that $\varphi$ is continuous since the addition is continuous in $X$ (\cite[Proposition 1.1.40]{cobzas}) and the scalar multiplication by positive numbers is also continuous (Lemma~\ref{l:scalar multiplication continuous}). Using the fact that
 $ A^{n+1}\times \Delta$ is compact we can then conclude that $\conv (A)$ is also compact.
\end{proof}

\begin{lemma}
Let $A$ be a strongly compact set in the finite dimensional asymmetric normed space $(X,q)$. Then $\conv (A)$ is also strongly compact.
\end{lemma}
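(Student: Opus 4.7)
The plan is to lift the definition of strong compactness along the convex-hull construction: if $S \subset A \subset S + \theta_q$ with $S$ being $q^s$-compact, then the natural candidate for a witness of the strong compactness of $\conv(A)$ is $\conv(S)$. So the strategy reduces to verifying that $\conv(S)$ is $q^s$-compact and that the two chain inclusions $\conv(S) \subset \conv(A) \subset \conv(S) + \theta_q$ hold.

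First I would invoke the hypothesis: there exists a $q^s$-compact $S \subset X$ with $S \subset A \subset S + \theta_q$. Set $S' := \conv(S)$. To see that $S'$ is $q^s$-compact, one applies the previous lemma (Lemma \ref{l:convex hull compact}) to the finite dimensional asymmetric normed space $(X,q^s)$, since a (symmetric) norm is in particular an asymmetric norm. Alternatively, one may simply quote the classical finite dimensional normed-space result. The leftmost inclusion $\conv(S) \subset \conv(A)$ is immediate from $S \subset A$ and the monotonicity of $\conv$.

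The only real step is the right inclusion $\conv(A) \subset \conv(S) + \theta_q$. Given $x \in \conv(A)$, by Carath\'eodory's Theorem write $x = \sum_{i=0}^n \lambda_i a_i$ with $a_i \in A$, $\lambda_i \geq 0$ and $\sum_i \lambda_i = 1$. Since $A \subset S + \theta_q$, decompose $a_i = s_i + t_i$ with $s_i \in S$ and $t_i \in \theta_q$. Then
$$
x \;=\; \sum_{i=0}^n \lambda_i s_i \;+\; \sum_{i=0}^n \lambda_i t_i,
$$
where the first summand lies in $\conv(S) = S'$, and the second lies in $\theta_q$ because $\theta_q$ is a convex cone (closed under addition and under multiplication by nonnegative scalars). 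This gives $x \in S' + \theta_q$, and hence $\conv(A) \subset S' + \theta_q$.

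There is no serious obstacle here; the only nontrivial ingredient is the compactness of $\conv(S)$ in $(X,q^s)$, which is precisely the finite dimensional fact (or the previous lemma), and the closure of $\theta_q$ under the required convex-cone operations, which is already recorded in the preliminaries.
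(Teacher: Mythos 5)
Your proposal is correct and follows essentially the same route as the paper: both take $\conv(S)$ as the witness, use the finite dimensional fact that the convex hull of a $q^s$-compact set is $q^s$-compact, and reduce everything to the inclusion $\conv(A)\subset\conv(S)+\theta_q$. The only cosmetic difference is that you verify this last inclusion element-wise via Carath\'eodory and the cone properties of $\theta_q$, whereas the paper observes that $\conv(S)+\theta_q$ is a convex set containing $S+\theta_q$ and hence contains $\conv(S+\theta_q)$; both arguments are equally valid.
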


\begin{proof}
Let $K\subset A$ be a $q^s$-compact set such that
$K\subset A\subset K+\theta_{q}$.
By \cite[Theorem 2.2.6]{Webster}, $\conv(K)$ is $q^s$-compact. Furthermore, we have that
$$\conv (K)\subset \conv (A)\subset \conv (K+\theta_q).$$
Since $\conv (K)+\theta_q$ is a convex set  containing $K+\theta_q$, we conclude that $\conv (K+\theta_q)\subset \conv (K)+\theta_q$ and therefore
$$\conv (K)\subset \conv (A)\subset \conv (K)+\theta_q.$$
This proves that $\conv (A)$ is strongly compact.
\end{proof}

\begin{lemma}\label{l: q^s closure is strongly compact}
Let $A$ be a  strongly compact set in a finite dimensional asymmetric normed space $(X,q)$. Then the $q^s$-closure of $A$ is also strongly compact.
\end{lemma}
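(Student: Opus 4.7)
The plan is to show that the same $q^s$-compact set $K$ witnessing the strong compactness of $A$ also witnesses the strong compactness of $\overline{A}^{q^s}$. So I start by choosing a $q^s$-compact $K$ with $K\subset A\subset K+\theta_q$, and I aim to verify $K\subset \overline{A}^{q^s}\subset K+\theta_q$. The left inclusion is immediate because $K\subset A$ and $A$ is contained in its $q^s$-closure.

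For the right inclusion the key is to show that $K+\theta_q$ is $q^s$-closed; once that is established, the inclusion $A\subset K+\theta_q$ passes to the $q^s$-closure of $A$ automatically. I would prove this in two steps.

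First, I would argue that $\theta_q$ itself is $q^s$-closed. The argument is that $q$ is Lipschitz with respect to $q^s$: from the triangle inequalities $q(x)\le q(y)+q(x-y)$ and $q(y)\le q(x)+q(y-x)$, together with $q(\pm z)\le q^s(z)$, one obtains $|q(x)-q(y)|\le q^s(x-y)$ for all $x,y\in X$. Hence $q:(X,q^s)\to\mathbb{R}$ is continuous, and $\theta_q=q^{-1}(\{0\})$ is $q^s$-closed.

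Second, I would use the standard fact that in the normed (hence metrizable) space $(X,q^s)$, the sum of a $q^s$-compact set with a $q^s$-closed set is $q^s$-closed. A direct sequential argument works: if $c_n+f_n\to x$ with $c_n\in K$ and $f_n\in \theta_q$, pass to a subsequence with $c_n\to c\in K$ by $q^s$-compactness, whence $f_n\to x-c$, and since $\theta_q$ is $q^s$-closed, $x-c\in\theta_q$, giving $x\in K+\theta_q$. Combining these facts yields $\overline{A}^{q^s}\subset \overline{K+\theta_q}^{q^s}=K+\theta_q$, completing the chain $K\subset \overline{A}^{q^s}\subset K+\theta_q$ and therefore the strong compactness of $\overline{A}^{q^s}$. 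The only subtle point is the $q^s$-closedness of $\theta_q$, but this reduces to an easy Lipschitz estimate; the rest is bookkeeping.
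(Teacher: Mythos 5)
Your proof is correct and follows essentially the same route as the paper's: both use the same witness $K$ and reduce the problem to showing that $K+\theta_q$ is $q^s$-closed. The only difference is that you supply elementary arguments for the two auxiliary facts (the $q^s$-closedness of $\theta_q$ via the Lipschitz estimate $|q(x)-q(y)|\le q^s(x-y)$, and the closedness of a compact-plus-closed sum via a sequential argument), whereas the paper cites references for both.
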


\begin{proof}
Let $K\subset X$ be a $q^s$-compact set such that $K\subset A\subset K+\theta_q$. For any $B\subset X$, let us denote by $\overline{B}^s$ the $q^s$-closure of $B$. Then we have that
$$\overline K^s\subset \overline A^s\subset \overline{K+\theta_q}^s.$$
Since $K$ is $q^s$-closed we have that $K=\overline K^s$. On the other hand, since $\theta_q$ is $q^s$-closed (see e.g. \cite[Lemma 3.6]{Jonard Sanchez}) by \cite[Theorem 1.8.10-(ii)]{Webster} the set $K+\theta_q$ is $q^s$-closed and therefore $K+\theta_q=\overline{K+\theta_q}^s$. Then
$ K\subset \overline A^s\subset K+\theta_q$
which proves that $\overline A^s$ is strongly compact, as desired.
\end{proof}

\begin{proposition}\label{p: locally compact norm}
Let $(X,q)$ be a finite dimensional asymmetric normed space.  If the origin has a compact neighbourhood $U$, then there exists an equivalent asymmetric norm $p$ such that each closed ball  $B_r^p[x]$ is  compact. If additionally $U$ is strongly compact, then the balls $B_r^p[x]$ are also strongly compact.
\end{proposition}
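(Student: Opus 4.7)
My plan is to construct $p$ as the Minkowski functional of a suitably chosen convex, $q$-compact neighborhood $C$ of the origin, so that the closed $p$-balls coincide with dilates of $C$. First set $V := U + \theta_q$, which is $q$-compact by (O2), remains a neighborhood of $0$, and satisfies $V + \theta_q = V$. Let $C := \conv(V)$; then $C$ is convex, $q$-compact by Lemma~\ref{l:convex hull compact}, contains a ball $B_\varepsilon^q(0)$, and also satisfies $C + \theta_q = C$ (from $V + \theta_q = V$, writing $\sum_i \lambda_i v_i + t = \sum_i \lambda_i (v_i + t)$). Define
\[
p(x) := \inf\{t > 0 : x \in tC\}.
\]
Positive homogeneity and subadditivity of $p$ follow from convexity of $C$ and $0 \in C$. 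Upper semicontinuity of $q$ on $(X,\tau_q)$, immediate from the reverse triangle inequality, together with $q$-compactness of $C$ yields a finite constant $M := \sup_{c \in C} q(c)$, and combined with $B_\varepsilon^q(0) \subset C$ this produces the equivalence
\[
\tfrac{1}{M} q(x) \leq p(x) \leq \tfrac{1}{\varepsilon} q(x).
\]
In particular $\theta_p = \theta_q$, so axiom (3) for $p$ is inherited from that of $q$.

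To show each $B_r^p[x]$ is $q$-compact, translation invariance and positive homogeneity of $p$ reduce the problem to proving the equality $B_1^p[0] = C$; for then $B_r^p[x] = x + rC$ will be the $\tau_q$-continuous image (via translation and scalar multiplication, cf.\ Lemma~\ref{l:scalar multiplication continuous}) of the $q$-compact set $C$. The inclusion $C \subset B_1^p[0]$ is clear. For the reverse, if $p(y) \leq 1$ then $y/(1 + 1/n) \in C$ for every $n$, while $y/(1 + 1/n) \to y$ in $\tau_{q^s}$ by continuity of scalar multiplication in the normed space $(X, q^s)$. Hence $y \in \overline{C}^s$, and the desired conclusion $y \in C$ reduces to showing that $C$ is $q^s$-closed.

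This $q^s$-closedness is the technical heart of the proof, and the step I expect to be the main obstacle. Given $y \in \overline{C}^s$, choose $c_n \in C$ with $q^s(c_n - y) \to 0$. Since $(X, \tau_q)$ is first countable and $C$ is $q$-compact, extract a $\tau_q$-convergent subsequence $c_{n_j} \to c$ with $c \in C$. The key estimate
\[
q(y - c) \leq q(y - c_{n_j}) + q(c_{n_j} - c) \leq q^s(c_{n_j} - y) + q(c_{n_j} - c) \longrightarrow 0
\]
forces $y - c \in \theta_q$, so that $y = c + (y - c) \in C + \theta_q = C$, as required.

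For the strongly compact case, the unlabeled lemma preceding Lemma~\ref{l: q^s closure is strongly compact} shows that $\conv(U)$ is strongly compact whenever $U$ is. Writing $\conv(U) \subset S + \theta_q$ with $S$ a $q^s$-compact set, we obtain $C = \conv(U) + \theta_q = S + \theta_q$, so $C$ is strongly compact. Since scaling by $r > 0$ and translation by $x$ preserve strong compactness, each $B_r^p[x] = x + rC$ is strongly compact, completing the proof.
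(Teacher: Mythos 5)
Your proof is correct and follows the same overall strategy as the paper: build a convex, $q$-compact neighbourhood of the origin sandwiched between two dilates of $B_1^q[0]$ and take $p$ to be its Minkowski functional. The one genuine difference is how the identity $B_1^p[0]=C$ (equivalently, the closedness of the gauge body) is secured. The paper takes $A=\overline{\conv(U)}^{s}$, invoking an external lemma that the $q^s$-closure of a $q$-compact set is $q$-compact, and then simply asserts that the gauge of $A$ has $A$ as its unit ball; you instead saturate first, setting $C=\conv(U+\theta_q)$, and prove directly that $C$ is $q^s$-closed via sequential compactness of $C$ in the first countable topology $\tau_q$ together with the absorption $C+\theta_q=C$. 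Your route is more self-contained (it replaces the citation of \cite[Lemma 3.2]{Jonard Sanchez} with an explicit argument and makes visible exactly why the unit ball of the gauge equals the chosen body), at the cost of a slightly longer verification; the two constructions produce the same set, since $\conv(U+\theta_q)=\conv(U)+\theta_q$ is already $q^s$-closed and hence coincides with $\overline{\conv(U)}^{s}+\theta_q$ up to the sandwich estimates. Your treatment of the strongly compact case, via the unlabelled convex-hull lemma and the identity $C=S+\theta_q$, matches the paper's in substance.
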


\begin{proof}
Let $U$ be a $q$-compact neighbourhood of the origin. Then there exist $\varepsilon>0$ and $\kappa >0$ such that
$$\varepsilon B_{1}^q[0]\subset U\subset \kappa B_{1}^q[0].$$
Let us denote by $A$ the $q^s$-closure of the convex hull of $U$. By Lemma~\ref{l:convex hull compact} and \cite[Lemma 3.2]{Jonard Sanchez}, we have that $A$ is compact.
Since $\kappa B_{1}^q[0]$ is convex and $q^s$-closed, we have that
\begin{equation}\label{f:contencion norma equivalente}
\varepsilon B_{1}^q[0]\subset U\subset A\subset \kappa B_{1}^q[0]ñ
\end{equation}
Now let $p$ be defined as the gauge functional of $A$. Namely,
$$p(x):=\{\inf t>0 \mid x\in tA\}.$$
Since $A$ is absorbent, convex and does not contain any line, $p$ defines an asymmetric norm such that $B_1^{p}[0]=A$ (see \cite{cobzas, Conradie 2}). Furthermore, by (\ref{f:contencion norma equivalente}) $p$ is equivalent to $q$. Since $A$ is $q$-compact, it is also $p$-compact and all the closed balls $B_r^p[x]=rB_1^p[0]+rx$ are also compact.

Now, if $U$ is strongly compact we get by Lemma~\ref{l: q^s closure is strongly compact} that $A=B_1^p[0]$ is strongly compact and therefore all closed balls $B_r^p[x]$ are strongly compact, as desired.
\end{proof}

\section{Strong local compactness}

The aim of this section is to analyze the boundedness properties of strong locally compact asymmetric spaces. As we said in the Introduction, right bounded spaces have not necessarily compact unitary balls. Indeed, let us start with the following example, which is a counterexample to \cite[Proposition 17]{luis}.

\begin{example}\label{e:main 1}
There exists a finite dimensional asymmetric normed space $(X,q)$ such that the unitary closed ball $B_1^q[0]$ is right bounded and non compact.
\end{example}
\begin{proof}

Let $X=\mathbb R^2$. Define the set $A\subset X$ as follows
$$A=\left\{(x,y)\in\mathbb R^2\mid y\leq \frac{1}{x-2}+2,\; x<2\right\}.$$
Let $q:X\to\mathbb [0,\infty)$ be defined as the gauge functional of $A$. Namely
$$q((x,y))=\inf \{t>0\mid( x,y)\in tA\}.$$
Clearly $q$ is an asymmetric norm in $X$ such that $A=B_1^q[(0,0)]$.
Furthermore, it is easy to verify that $\theta_q=\{(x,y)\mid x\leq 0, y\leq 0\}$ and
$B_{1}^q[(0,0)]\subset (2,2)+\theta_q\subset 2B_1^{q^s}[(0,0)]+\theta _q$.
Then  we can conclude that
$$\frac{1}{2}B_{1}^q[(0,0)]\subset B_1^{q^s}[(0,0)]+\theta_q$$
and therefore $q$ is right bounded.
Now let us observe that $B_{1}^q[(0,0)]$ is non compact. Indeed, let us consider the family $\mathcal U=\{B_1^q((2,t))\}_{t<0}$.
Clearly, for every $t<0$,
$$B_1^q((2,t))=\left\{(x,y)\in\mathbb R^2, y<\frac{1}{x-4}+2+t, x<4\right\}$$
and $\mathcal U$ is an open cover for $B_1^q[(0,0)]$.
Furhtermore, if $t>s$, then
$B_1^q((2,s))\subset B_1^q((2,t))$ and therefore $\mathcal U$ is a nested family.
A simple calculation shows  that every point $(x,y)\in B_1^q[(0,0)]$ such that
  $y=1/(x-2)+2$ and $x\geq 3-\sqrt{1-2/t}$   cannot belong to $B_1^q((2,t))$.
Since $\mathcal U$ is nested we conclude that $B_1^q[(0,0)]$ cannot be covered with a finite subfamily of $\mathcal U$ and therefore $B_1^q[(0,0)]$ is non compact.
\end{proof}

\begin{remark}\label{r:locally compact not norm}
The norm $q$ defined in Example~\ref{e:main 1} is equivalent to the asymmetric lattice  norm $p:\mathbb R^2\to \mathbb R$ given by
$$p((x,y))=\max\{x^+, y^+\}$$
where $x^+=\max\{x,0\}$. It is well known that $B_1^p[(0,0)]$ is $p$-compact and therefore $(X,p)$ is locally compact (in fact, it is strongly locally compact). Since $(X,p)$ is equivalent to $(X,q)$ we infer that an asymmetric normed space can be locally compact, even if the unitary closed ball is not compact.
\end{remark}

After Example~\ref{e:main 1} and Remark~\ref{r:locally compact not norm}, it is natural to ask what is the relation between right boundedness and strong local compactness in finite dimensional asymmetric normed spaces. The aim of this section is to answer that question.

\begin{lemma}\label{l: second main implication}
Let $(X,q)$ be a finite dimensional asymmetric normed space. If $q$ is 1-bounded, then $B_1^q[0]$ is strongly compact  and therefore $(X,q)$ is strongly locally compact.

\end{lemma}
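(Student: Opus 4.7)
The plan is to prove strong compactness of $B_1^q[0]$ essentially by unwrapping the definitions, then propagate this to arbitrary points by translation and positive scaling.

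First I would invoke Remark~\ref{r: 1 bounded iff balls equals} to rewrite the 1-bounded hypothesis as the equality $B_1^q[0]=B_1^{q^s}[0]+\theta_q$. Since $X$ is finite dimensional, $q^s$ is a genuine norm on a finite dimensional space, so the symmetric closed ball $S:=B_1^{q^s}[0]$ is $q^s$-compact. The equality above yields $S\subset B_1^q[0]=S+\theta_q$, which is exactly the definition of $B_1^q[0]$ being strongly $q$-compact.

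Next, to obtain strong local compactness, I would argue that for every $x\in X$ and every $\varepsilon>0$ the closed ball $B_\varepsilon^q[x]=x+\varepsilon B_1^q[0]$ is also strongly compact. Indeed, $x+\varepsilon S$ is $q^s$-compact (translation and multiplication by $\varepsilon>0$ are $q^s$-continuous since $q^s$ is a norm), and, using that $\theta_q$ is a cone, one gets
\[
x+\varepsilon S\;\subset\; x+\varepsilon B_1^q[0]=B_\varepsilon^q[x]\;\subset\; x+\varepsilon S+\varepsilon\theta_q\;=\;(x+\varepsilon S)+\theta_q.
\]
This verifies the definition of strong compactness for $B_\varepsilon^q[x]$.

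Finally, since $B_\varepsilon^q[x]\supset B_\varepsilon^q(x)$ and the open balls $B_\varepsilon^q(x)$ form a local base at $x$ in the topology $\tau_q$, the family $\{B_\varepsilon^q[x]\}_{\varepsilon>0}$ is a local base of strongly compact neighbourhoods at $x$, proving strong local compactness. There is no real obstacle here; the only point requiring care is that the scaling step relies on $\varepsilon>0$ and on $\theta_q$ being a cone (so that $\varepsilon\theta_q=\theta_q$), both of which are immediate from the definitions recalled in Section~2.
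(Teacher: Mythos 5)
Your proposal is correct and follows essentially the same route as the paper: identify $S=B_1^{q^s}[0]$ as the $q^s$-compact set witnessing strong compactness via the equality $B_1^q[0]=B_1^{q^s}[0]+\theta_q$. The paper leaves the passage to strong local compactness implicit in its ``therefore''; your explicit verification that the translated and scaled balls $B_\varepsilon^q[x]=(x+\varepsilon S)+\theta_q$ form a local base of strongly compact neighbourhoods is exactly the routine argument being suppressed there.
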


\begin{proof}
Observe that $B_1^{q^s}[0]$ is $q^s$-compact (because $X$ is finite dimensional). Now, since $q$ is 1-bounded, we  have that
$$B_1^{q^s}[0]\subset B_1^q[0] =B_1^{q^s}[0]+\theta_q.$$
This directly implies that  $B_1^q[0]$ is strongly compact.

\end{proof}

Many important asymmetric normed spaces are 1-bounded. For example, every asymmetric norm defined by a Banach lattice norm is 1-bounded (\cite[Lemma 1]{todos}). Furthermore, as we have already shown, the norm $q_p$ associated to an asymmetric normed space $(X,p)$ is 1-bounded. This gives us the following

\begin{corollary}
If $(X,p)$ is a finite dimensional asymmetric normed space, then $B_{1}^{q_p}[0]$ is strongly compact.
\end{corollary}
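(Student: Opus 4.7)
The statement is essentially a one-line consequence of the machinery already developed, so my plan is simply to combine two previously established facts.

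First, I would observe that the hypothesis ``$(X,p)$ is a finite dimensional asymmetric normed space'' is a hypothesis on the underlying vector space $X$, not on the particular asymmetric norm. Since $q_p$ is an asymmetric norm on the same vector space $X$ (as recalled from \cite{Conradie 2}), the pair $(X, q_p)$ is also a finite dimensional asymmetric normed space. This point is trivial but worth making explicit because Lemma~\ref{l: second main implication} is stated for finite dimensional spaces.

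Second, I would invoke property (p7) of the previous subsection, which states precisely that $q_p$ is $1$-bounded. Combining this with Lemma~\ref{l: second main implication} applied to the space $(X, q_p)$ immediately yields that $B_1^{q_p}[0]$ is strongly compact.

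There is no real obstacle here; the conceptual content has already been carried out in the proof of Lemma~\ref{l: second main implication} (which uses $B_1^{q_p^s}[0]$ as the $q_p^s$-compact witness and the equality $B_1^{q_p}[0] = B_1^{q_p^s}[0] + \theta_{q_p}$ guaranteed by $1$-boundedness). If one wished to be fully self-contained without appealing to the lemma, the alternative route would be to write directly: by properties (p6) and (p3), $B_1^{q_p^s}[0] = B_1^{p^s}[0]$ is $p^s$-compact (hence $q_p^s$-compact, using finite dimensionality), and $B_1^{q_p}[0] = B_1^{p^s}[0] + \theta_p$, so taking $S = B_1^{q_p^s}[0]$ and using $\theta_p = \theta_{q_p}$ from (p5) gives $S \subset B_1^{q_p}[0] \subset S + \theta_{q_p}$, the defining condition of strong compactness. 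But the cleanest presentation is simply the two-line argument via (p7) and Lemma~\ref{l: second main implication}.
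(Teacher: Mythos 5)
Your proposal is correct and follows exactly the route the paper intends: the corollary is stated as an immediate consequence of property (p7) ($q_p$ is $1$-bounded) together with Lemma~\ref{l: second main implication}, which is precisely your two-line argument. The paper gives no separate proof, so nothing further is needed.
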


\begin{lemma}\label{l:first main implications}
The asymmetric normed space $(X,p)$ is right bounded if and only if there exists an equivalent norm $q$ such that $q$ is $1$-bounded.
\end{lemma}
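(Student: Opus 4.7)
The forward direction should fall out immediately from the machinery already developed around $q_p$. Assuming $p$ is right bounded, I would simply take $q := q_p$, the canonical asymmetric norm introduced in the subsection on $q_p$. Property (p4) gives that $p$ and $q_p$ are equivalent precisely because $p$ is right bounded, and property (p7) says $q_p$ is always $1$-bounded. So $q_p$ serves as the desired equivalent $1$-bounded norm with no further work.

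For the converse direction, suppose $q$ is equivalent to $p$ and $1$-bounded. By Remark~\ref{r: 1 bounded iff balls equals} this means
\[
B_1^q[0] = B_1^{q^s}[0] + \theta_q,
\]
and by Lemma~\ref{l:thetas iguales} we have $\theta_q = \theta_p$, so the right-hand side is really $B_1^{q^s}[0]+\theta_p$. The plan is to translate this equality into a right-boundedness inequality for $p$ by absorbing the ball $B_1^p[0]$ into a scalar multiple of $B_1^q[0]$, and conversely expanding $B_1^{q^s}[0]$ inside a scalar multiple of $B_1^{p^s}[0]$.

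Concretely, from $\kappa q \le p \le \lambda q$ one gets $\tfrac{1}{\lambda}B_1^q[0]\subset B_1^p[0]\subset \tfrac{1}{\kappa}B_1^q[0]$; the same chain of inequalities applied coordinate-wise to $\pm x$ gives $\kappa q^s \le p^s \le \lambda q^s$, hence $B_1^{q^s}[0]\subset \lambda B_1^{p^s}[0]$. Chaining these:
\[
B_1^p[0] \;\subset\; \tfrac{1}{\kappa}B_1^q[0] \;=\; \tfrac{1}{\kappa}\bigl(B_1^{q^s}[0]+\theta_p\bigr) \;\subset\; \tfrac{\lambda}{\kappa}B_1^{p^s}[0]+\theta_p,
\]
where in the last step I use that $\theta_p$ is a cone so $\tfrac{1}{\kappa}\theta_p=\theta_p$. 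Dividing by $\lambda/\kappa$ yields $rB_1^p[0]\subset B_1^{p^s}[0]+\theta_p$ with $r=\kappa/\lambda>0$, which is exactly right-boundedness of $p$.

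There is essentially no conceptual obstacle; the only thing to be careful with is tracking the constants and invoking the cone property of $\theta_p$ correctly so that $\tfrac{1}{\kappa}\theta_p$ gets reabsorbed. Notably, the argument does not use finite-dimensionality, since the equivalence $\kappa q^s\le p^s\le\lambda q^s$ is a direct consequence of the equivalence $\kappa q\le p\le \lambda q$ rather than a general comparison of norms on a finite-dimensional space.
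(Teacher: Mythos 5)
Your proof is correct. The forward direction is exactly the paper's: take $q=q_p$ and invoke properties (p4) and (p7). For the converse, the paper simply notes that a $1$-bounded norm is right bounded (with $r=1$) and cites Lemma~\ref{l:p right bounded q too} to transfer right boundedness to the equivalent norm $p$; you instead unpack $1$-boundedness via Remark~\ref{r: 1 bounded iff balls equals} and carry out the ball-inclusion computation by hand. The computation mirrors the one inside the proof of Lemma~\ref{l:p right bounded q too}, but with one genuine improvement: you obtain $\kappa q^s\le p^s\le \lambda q^s$ directly from $\kappa q\le p\le \lambda q$ (by taking the maximum over $\pm x$), whereas the paper derives the equivalence of $q^s$ and $p^s$ from finite dimensionality of $X$. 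Your route therefore establishes the lemma as literally stated, with no dimension hypothesis --- a small but real gain, since the statement does not assume $X$ finite dimensional even though it is only applied in that setting. The constants check out: $\tfrac{\kappa}{\lambda}B_1^p[0]\subset B_1^{p^s}[0]+\theta_p$ is precisely right boundedness of $p$ with $r=\kappa/\lambda$, and the absorption $\tfrac{1}{\kappa}\theta_p=\theta_p$ is legitimate because $\theta_p$ is a cone.
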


\begin{proof}
For the first implication just consider $q=q_p$ and properties (p4) and (p7).
The second implication follows directly from Lemma~\ref{l:p right bounded q too}
\end{proof}

\begin{proposition}\label{p: locally compact is right bounded}
 Let $(X,p)$ be a finite dimensional asymmetric normed space. If the origin has a strongly compact neighborhood, then $p$ is right bounded.
\end{proposition}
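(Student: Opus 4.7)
The plan is to extract the two pieces of information encoded in the hypothesis and feed them into Lemma~\ref{l: right bounded iff bounded set}. Call the strongly compact neighborhood $U$. Being a $\tau_p$-neighborhood of the origin, $U$ contains some open ball $B_\varepsilon^p(0)$, and shrinking the radius we may assume $B_\varepsilon^p[0]\subset U$. Being strongly compact, there is a $p^s$-compact set $S$ with $S\subset U\subset S+\theta_p$. Combining the two inclusions yields
\[
B_\varepsilon^p[0]\subset S+\theta_p.
\]

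Next I would use the finite dimensionality of $X$: since $p^s$ is a genuine norm on a finite-dimensional vector space, every $p^s$-compact set is $p^s$-bounded, so there is $M>0$ with $S\subset M\,B_1^{p^s}[0]$. Plugging this in gives
\[
B_\varepsilon^p[0]\subset M\,B_1^{p^s}[0]+\theta_p.
\]
Using $B_\varepsilon^p[0]=\varepsilon\,B_1^p[0]$ and the fact that $\theta_p$ is a cone (so $\lambda\theta_p=\theta_p$ for $\lambda>0$), I rescale by $1/\varepsilon$ to obtain
\[
B_1^p[0]\subset \tfrac{M}{\varepsilon}\,B_1^{p^s}[0]+\theta_p.
\]
Setting $K:=\tfrac{M}{\varepsilon}B_1^{p^s}[0]$, which is evidently $p^s$-bounded, Lemma~\ref{l: right bounded iff bounded set} immediately gives that $p$ is right bounded.

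There is no real obstacle here; the argument is essentially a direct unpacking of the definitions once the two preparatory facts (neighborhoods contain closed balls of small radius; $p^s$-compactness implies $p^s$-boundedness in finite dimension) are in hand. The only minor care needed is to pass from the open ball $B_\varepsilon^p(0)$ guaranteed by the neighborhood condition to a closed ball $B_{\varepsilon/2}^p[0]$ that still lies inside $U$, which is trivial since $p(y)\le \varepsilon/2$ implies $p(y)<\varepsilon$.
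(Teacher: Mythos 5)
Your proof is correct, but it takes a genuinely different and more direct route than the paper. The paper first invokes Proposition~\ref{p: locally compact norm} to replace $p$ by an equivalent asymmetric norm $q$ whose unit ball is strongly compact (this requires taking the convex hull of $U$, its $q^s$-closure, and the associated gauge functional), and then transfers the resulting inclusion $K\subset B_1^q[0]\subset K+\theta_q$ back to $p$ by hand, using the equivalence of $q^s$ and $p^s$ and of $q$ and $p$. You bypass the construction of the equivalent norm entirely: you extract a closed ball $B_\varepsilon^p[0]\subset U$ directly from the neighborhood hypothesis, use the $p^s$-compact (hence $p^s$-bounded) set $S$ from the definition of strong compactness, and feed the inclusion $B_1^p[0]\subset \frac{M}{\varepsilon}B_1^{p^s}[0]+\theta_p$ into Lemma~\ref{l: right bounded iff bounded set}, a lemma the paper proves but does not actually use in its own argument here. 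Your route is shorter and, notably, nowhere uses finite dimensionality (compact sets are bounded in any normed space, and Lemma~\ref{l: right bounded iff bounded set} is stated in full generality), so it proves a slightly stronger statement. What the paper's detour buys is the auxiliary norm $q$ with strongly compact balls, which is of independent use in Theorem~\ref{elpri}, but it is not needed for this proposition.
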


\begin{proof}
 By Proposition~\ref{p: locally compact norm}, we can find an equivalent norm $q$ such that $B_1^q[0]$ is strongly compact. Then there exists a $q^s$-compact set $K\subset X$ such that
$$K\subset B_1^q[0]\subset K+\theta_q.$$
Since $X$ is finite dimensional we have that $q^s$ and $p^s$ are equivalent norms, then  $K$ is $p^s$ compact and we can find $a>0$ such that $K\subset aB_1^{p^s}$. On the other hand, since
 $q$ is equivalent to $p$,  there exists $b>0$ with  $bB_1^p[0]\subset B_1^q[0]$.
Using Lemma~\ref{l:thetas iguales}, we infer that
\begin{align*}
bB_1^p[0]&\subset B_1^q[0]\subset K+\theta_q\subset aB_1^{p^s}[0]+\theta_q\\
&=aB_1^{p^s}[0]+a\theta_p=a(B_1^{p^s}[0]+\theta_p).
\end{align*}
Therefore $r B_1^p[0]\subset B_1^{p^s}[0]+\theta_p$ with $r=b/a$, and then $p$ is right bounded, as desired.
\end{proof}

Now we summarize all the previous work in the main result of this section.

\begin{theorem}  \label{elpri}
Let $(X,p)$ be a finite dimensional asymmetric normed space. The following statements are equivalent:
\begin{enumerate}[\rm(1)]
\item $p$ is right bounded.
\item There exists an equivalent asymmetric norm $q$ in $X$ such that $q$ is $1$-bounded.
\item There exists an equivalent asymmetric norm $q$ in $X$ such that $B_1^q[0]$ is strongly compact.

\item $(X,p)$ is strongly locally compact.
\end{enumerate}

\end{theorem}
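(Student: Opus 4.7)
The plan is to prove the cyclic chain $(1) \Rightarrow (2) \Rightarrow (3) \Rightarrow (4) \Rightarrow (1)$, observing that three of the four implications are essentially already in hand from the preceding lemmas; only $(3) \Rightarrow (4)$ needs a short direct argument.

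First, $(1) \Rightarrow (2)$ is exactly the forward direction of Lemma~\ref{l:first main implications}: take $q = q_p$, which is $1$-bounded by (p7) and equivalent to $p$ by (p4) since $p$ is right bounded. Next, $(2) \Rightarrow (3)$ is Lemma~\ref{l: second main implication}, which tells us that in finite dimension a $1$-bounded asymmetric norm automatically has strongly compact closed unit ball. Finally, $(4) \Rightarrow (1)$ is precisely Proposition~\ref{p: locally compact is right bounded}.

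The one step that requires a bit of argument is $(3) \Rightarrow (4)$. Here I would fix an equivalent norm $q$ with $B_1^q[0]$ strongly compact, so there is a $q^s$-compact $K$ with $K \subset B_1^q[0] \subset K + \theta_q$. Since $p$ and $q$ generate the same topology, it suffices to build a strongly compact neighborhood base at each $x \in X$ in the $q$-topology. The family $\{B_r^q[x]\}_{r>0}$ is such a base, and for each $r > 0$ one has $B_r^q[x] = rB_1^q[0] + x$. Now $rK + x$ is $q^s$-compact (translation and positive dilation are $q^s$-continuous) and
\[
rK + x \;\subset\; rB_1^q[0] + x \;\subset\; rK + r\theta_q + x \;=\; (rK + x) + \theta_q,
\]
where we used that $\theta_q$ is a cone so $r\theta_q = \theta_q$. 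This shows each $B_r^q[x]$ is strongly compact, giving strong local compactness.

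The only potential subtlety, which I expect to be the main (minor) obstacle, is the identification $\theta_p = \theta_q$ implicit in moving between the two equivalent norms when verifying strong compactness in the $p$-sense versus the $q$-sense; this is handled by Lemma~\ref{l:thetas iguales} together with the equivalence of $p^s$ and $q^s$ in finite dimension, exactly as in the proof of Proposition~\ref{p: locally compact is right bounded}. With these observations the four implications close the cycle and the theorem follows.
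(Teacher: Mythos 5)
Your proposal is correct and follows exactly the same route as the paper: $(1)\Leftrightarrow(2)$ via Lemma~\ref{l:first main implications}, $(2)\Rightarrow(3)$ via Lemma~\ref{l: second main implication}, and $(4)\Rightarrow(1)$ via Proposition~\ref{p: locally compact is right bounded}. The only difference is that the paper dismisses $(3)\Rightarrow(4)$ as obvious, whereas you spell out the (correct) verification that translates and positive dilates of a strongly compact unit ball give a strongly compact neighborhood base.
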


\begin{proof}
The part $(1)\Longleftrightarrow (2)$ was already proved in Lemma~\ref{l:first main implications}.

Implication $(2)\Longrightarrow (3)$ follows from Lemma~\ref{l: second main implication}.

Finally, implication  $(3)\Longrightarrow (4)$ is  obvious and  $(4)\Longrightarrow (1)$ was proved in Proposition~\ref{p: locally compact is right bounded}.
\end{proof}

We finish this section by giving a  useful characterization of equivalent right bounded asymmetric norms.

\begin{theorem}
Let $X$ be a real finite dimensional linear space. Two right bounded asymmetric norms $p$ and $p'$ are equivalent if and only if $\theta_{p}=\theta_{p'}$.
\end{theorem}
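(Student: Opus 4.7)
My plan is to reduce the problem to the canonical $1$-bounded representatives $q_p$ and $q_{p'}$, where the coincidence of the cones $\theta$ makes the desired equivalence transparent.

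The forward direction is immediate from Lemma~\ref{l:thetas iguales}, which gives $\theta_p = \theta_{p'}$ as soon as $p$ and $p'$ are equivalent (and does not even require right boundedness). So the entire content lies in the converse, which I will attack as follows.

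Assume $\theta_p = \theta_{p'}$ and both norms are right bounded. By property (p4) we have $p \sim q_p$ and $p' \sim q_{p'}$, so it suffices to show $q_p \sim q_{p'}$. Since $X$ is finite dimensional, the symmetric norms $p^s$ and $(p')^s$ are equivalent in the classical sense; fix constants $c_1, c_2 > 0$ with $c_1\, p^s(z) \le (p')^s(z) \le c_2\, p^s(z)$ for every $z \in X$. Using the hypothesis $\theta_p = \theta_{p'}$, for every $x \in X$ I can write
\begin{align*}
q_{p'}(x) &= \inf\{(p')^s(x-y) : y \in \theta_{p'}\} = \inf\{(p')^s(x-y) : y \in \theta_{p}\}, \\
q_{p}(x)  &= \inf\{p^s(x-y) : y \in \theta_{p}\},
\end{align*}
and applying the two-sided inequality above term by term and then passing to the infimum yields
$$c_1\, q_p(x) \le q_{p'}(x) \le c_2\, q_p(x).$$
Hence $q_p$ and $q_{p'}$ are equivalent, and chaining $p \sim q_p \sim q_{p'} \sim p'$ delivers the conclusion.

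The only subtlety I anticipate is making sure the reduction to $q_p, q_{p'}$ is legitimate, which relies crucially on right boundedness via (p4); without that hypothesis one would only get $p \le q_p$ and the argument would collapse. The identification of the indexing sets in the two infima uses the hypothesis $\theta_p = \theta_{p'}$ directly, and the finite dimensionality of $X$ is needed exactly to guarantee that $p^s$ and $(p')^s$ are equivalent symmetric norms. Everything else is bookkeeping.
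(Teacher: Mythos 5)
Your proof is correct. The overall strategy coincides with the paper's: both arguments reduce to the canonical norms $q_p$ and $q_{p'}$, invoke (p4) to transfer equivalence back to $p$ and $p'$, and use finite dimensionality to get equivalence of the associated symmetric norms. Where you genuinely diverge is in how you establish $q_p\sim q_{p'}$: the paper exploits the $1$-boundedness of the canonical norms via the ball identity $B_1^{q_p}[0]=B_1^{p^s}[0]+\theta_p$ (properties (p3) and (p7)) and runs a chain of set inclusions $\mu B_1^{q}[0]\subset B_1^{q'}[0]\subset\lambda B_1^{q}[0]$ using the cone property $\mu\theta_q=\theta_q$, whereas you work directly with the infimum formula $q_p(x)=\inf\{p^s(x-y)\mid y\in\theta_p\}$, identify the two indexing sets via $\theta_p=\theta_{p'}$, and pass the two-sided pointwise inequality $c_1\,p^s\le (p')^s\le c_2\,p^s$ through the infimum. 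Your route is somewhat more elementary in that it never needs (p3) or (p7) at all --- only the definition of $q_p$, property (p4), and the hypothesis on the cones --- at the cost of manipulating infima rather than convex sets; the paper's ball-inclusion computation, by contrast, reuses machinery ((p3), (p7), Remark~\ref{r: 1 bounded iff balls equals}) already developed for the main theorem. Both are complete and correct.
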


\begin{proof}
The ``only if" implication was already proven in Lemma~\ref{l:thetas iguales}. For the ``if" implication let $q:=q_p$ and $q':=q_{p'}$. Then $q$ and $q'$ are $1$-bounded and therefore $B_{1}^q[0]=B_1^{q^s}[0]+\theta_q$ and $B_{1}^{q'}[0]=B_1^{q'^s}[0]+\theta_q'$.

On the other hand, since $X$ is finite dimensional, the norms $q^s$ and $q'^s$ are equivalent and therefore we can find $\mu>0$ and $\lambda >0$ with the property that
 $$\mu B_{1}^{q^s}[0]\subset B_1^{q'^s}[0]\subset  \lambda B_{1}^{q^s}[0].$$
 Since $\theta_{p}=\theta_{p'}$ we can use property (p5) to infer that
 $\theta_q=\theta_{q'}$. Thus
 \begin{align*}
\mu B_1^q[0]&= \mu (B_{1}^{q^s}[0]+\theta_q)= \mu B_{1}^{q^s}[0]+\mu\theta_q\\
&=\mu B_{1}^{q^s}[0]+\theta_q \subset B_1^{q'^s}[0]+\theta_{q}\\
&\subset  \lambda B_{1}^{q^s}[0]+\theta_q=\lambda B_{1}^{q^s}[0]+\lambda\theta_q\\
& =\lambda ( B_{1}^{q^s}[0]+\theta_q)=\lambda B_{1}^{q}[0].
 \end{align*}
Since $B_{1}^{q'^s}[0]+\theta_q=B_{1}^{q'^s}[0]+\theta_q'=B_1^{q'}[0]$, we conclude that $q$ and $q'$ are equivalent. Now, since $p$ and $p'$ are right bounded, they are equivalent to $q$ and $q'$ (by property (p4)), respectively, and therefore $p$ and $p'$ are also  equivalent.
\end{proof}

\section{Right boundedness and the geometry of the unitary ball}

Let us finish the paper by exploring  the relation among right boundedness, compactness and the geometry of the unitary closed ball in a finite dimensional asymmetric normed space.

In \cite{Jonard Sanchez} it was proven that the convex hull of the extreme points of a compact convex set in an asymmetric normed space defines the topology of the set. In this section we will use the techniques used there to understand the relation among the geometry and the topology of the unitary closed balls in finite dimensional asymmetric normed spaces.

Given a convex set
$K$ in an asymmetric normed space $(X,q)$, let us denote by $E(K)$ the extreme points\footnote{ A point
$x\in A$ is an \textit{extreme} point of $A$ if $x=y=z$ whenever $y,z\in A$ and $x=\lambda y+(1-\lambda)z$ for some $\lambda\in (0,1)$.} of $K+\theta_q$, and  by $S(K)$ the convex hull of $E(K)$.

First, it is interesting to note compactness of the set $E(K)$ is not a necessary condition to guarantee the compactness of a convex set $K$. For example, consider the   the asymmetric lattice norm $(\mathbb{R}^2, p)$ defined in Remark~\ref{r:locally compact not norm}. Then the set
$$K=\{(x,y)\in\mathbb R^2\mid x\leq 0\text{ and }y\leq -x^2\}$$
is a strongly compact convex set with an unbounded set $E(K)$.

On the other hand
 it was proven in \cite{new} that in a finite dimensional asymmetric normed lattice $(X,q)$, all $q$-compact and $q^s$-closed sets are strongly compact.
If the dimension of $X$ is 2, then it is also known (see \cite{Jonard Sanchez 2}) that every $q$-compact and convex set is strongly compact (even if it is not $q^s$-closed). Nevertheless,  not all compact convex sets are strongly compact (the reader can find an example of this in \cite{Jonard Sanchez 2}).
After these results, it is natural to ask if in general, a  compact convex set of a finite dimensional  asymmetric normed space $(X,q) $ is strongly compact if it is $q^s$-closed. Even more, after Theorem~\ref{elpri}  another  natural question arises. On one hand, Theorem \ref{elpri} shows that local compactness of an equivalent norm 
characterizes  right boundedness and strong local compactness. Therefore, we can ask if in general compactness of the unit ball implies strong compactness.
The next example solves these two questions negatively. 

\begin{example} \label{inf}
There is  a finite dimensional asymmetric normed space such that  $B_1^q[0]$ is compact, but not strongly compact.
\end{example}
\begin{proof}
In order to see this, 
consider the cylinder $C=\{(x,y, z)\in\mathbb R^3 \mid y^2+z^2 \leq 1\text{ and }x\leq 1\}$ in $\mathbb R^3$, and let us define
$$R=\{(x,0,0)\in\mathbb R^3\mid x\leq 0\}.$$
Let $p:\mathbb R^3\to [0,\infty)$ be the gauge functional over the set $C$
$$p(u)=\inf\{t\geq 0\mid u\in tC\}.$$
Then $p$ is an asymmetric norm in $\mathbb R^3$, for whom the unitary closed ball is the set $C$ and such that $\theta_p=R$. We will define an equivalent norm $q:\mathbb R^3\to [0,\infty)$ with the required characteristics. 

To do that, let us start by defining, for each $n\in\mathbb N$, the values:
$$y_n= \cos\left(\frac{n\pi}{2(n+1)}\right)\quad \text{ and }\quad z_n= \sin\left(\frac{n\pi}{2(n+1)}\right).$$
And let us consider the sequence $(u_n)_{n\in\mathbb N}$ in $\mathbb R^3$, where $u_n=(-n,y_n, z_n )$. Evidently, in the topology $\tau_p$, the sequence $ (u_n)_{n\in\mathbb N}$ converges to the point $(1,0,0)$, and therefore the set $A_1=\{u_n\mid n\in\mathbb N\}\cup \{(1,0,0)\}$ is $p$-compact.

On the other hand, the set $A_2=\{(1,y,z)\in\mathbb R^3\mid |y|+|z|\leq 1\}$ is clearly $p^s$-compact and therefore it is also $p$-compact. Thus $A_1\cup A_2$ is compact. Let $A=\conv (A_1\cup A_2)$. By Lemma~\ref{l:convex hull compact}, $A$ is compact and therefore $A+\theta_p$ is also compact (Property (O2)). Therefore we can   use \cite[Lemma 3.2]{Jonard Sanchez} in order to conclude that $B:=\overline{A}^s$ is a $p^s$-closed $p$-compact convex set. Furthermore, the set $B$ satisfies 
\begin{enumerate}[\rm(1)]
\item $B=B+\theta_p$
\item  $\frac{1}{\sqrt 2}C\subset B\subset C$
\end{enumerate}

Define now $q:\mathbb R^3\to[0,\infty)$ as the gauge functional over $B$. Namely
$$q(u)=\inf \{t\geq 0\mid u\in tB\}.$$
Clearly $B$ is the closed unitary ball with respect to $q$ and $\theta_q=R=\theta_p$.
Property $(2)$ implies that  $q$ is equivalent to $p$. In particular, since  $p$ is obviously right bounded,  we also get  that $q$ is right bounded (by Lemma~\ref{l:p right bounded q too}). By the same reason, since $B$ is $p$-compact it is also $q$-compact, as desired.

It just remains to prove that $B$ is not strongly compact. For that purpose, suppose the opposite. Then we can find a $q^s$-compact set $K$ such that $K\subset B\subset K+\theta_q$. Since $B$ is convex, we can use Lemma~\ref{l:convex hull compact} in order to assume without any loss of generality that the set $K$ is convex. 
We can also observe that 
$$K\subset B\subset K+\theta_q\subset B+\theta_q=B,$$
and therefore $B=K+\theta_q$. Now, by \cite[Theorem 4.1]{Jonard Sanchez} the set of extreme points $E(B)$ is contained in $K$. Since every point in the set $A_1$ is an extreme point of $B$, we infer that $A_1\subset K$. In particular, we get that $A_1$ is $q^s$-bounded, which is a contradiction.

\end{proof}

In particular this gives the existence of $q^s$-closed convex sets in finite dimensional asymmetric normed spaces that are compact but not strongly compact (c.f. \cite{new}).  Example \ref{inf} also shows   that there is a compact unitary closed ball of an asymmetric normed space of dimension $3$ such that the set of its extreme points 
 $E(B_1^q[0])$ is not $q^s$-bounded.

In what follows we show that some positive results can also be found on the relation among extreme points and right boundedness.

Other results on the behavior of asymmetric norms in finite dimensional spaces can be proven using the tools developed above.
\begin{lemma}\label{l:geometry of balls}
 For any finite dimensional asymmetric normed space
$(X,q)$, we always have $B_1^q[0]=S(B_1^q[0])+\theta_q$.
\end{lemma}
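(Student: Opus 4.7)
The plan is to establish the two containments separately. Setting $V:=B_1^q[0]$, the identity $V+\theta_q=V$ is immediate: one inclusion is trivial since $0\in\theta_q$, and the other follows from subadditivity, since $q(v+w)\le q(v)+q(w)=q(v)\le 1$ for $v\in V$ and $w\in\theta_q$. Consequently $E(V)$ is just the set of extreme points of $V$ itself, and the easy inclusion $S(V)+\theta_q\subset V$ is automatic: $E(V)\subset V$ gives $\conv(E(V))\subset V$ by convexity, and $V+\theta_q=V$ finishes it.

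For the non-trivial inclusion $V\subset S(V)+\theta_q$, my plan is to appeal to the classical representation theorem for line-free closed convex sets in a finite-dimensional vector space: any such set equals the convex hull of its extreme points plus its recession cone (see, e.g., Webster's \emph{Convexity}, which is already cited in the paper). Three properties of $V$ must be verified to apply this. First, $V$ is $q^s$-closed: in finite dimension $q$ is a real-valued sublinear (hence convex) function and therefore $q^s$-continuous; alternatively, the estimate $q(x)\le q(x-x_n)+q(x_n)\le q^s(x-x_n)+q(x_n)$ shows that $q$ is $q^s$-lower semicontinuous, so $V=q^{-1}([0,1])$ is $q^s$-closed. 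Second, the recession cone of $V$ equals $\theta_q$: one inclusion follows from $V+\theta_q=V$, and conversely, if $tv\in V$ for every $t\ge 0$ (taking $0\in V$ as the base point), then $t\,q(v)\le 1$ for all $t\ge 0$, forcing $q(v)=0$. Third, $V$ contains no line, because any direction $v$ in $\theta_q\cap(-\theta_q)$ satisfies $q^s(v)=\max\{q(v),q(-v)\}=0$ and hence $v=0$.

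Putting these together, the representation theorem yields $V=\conv(E(V))+\theta_q=S(V)+\theta_q$, which is the claim. The main obstacle, such as it is, is conceptual rather than technical: although $B_1^q[0]$ is in general not $\tau_q$-closed, it \emph{is} closed in the symmetric topology, thanks to the continuity of $q$ on a finite-dimensional space. Once this observation is in place, all the asymmetric information about $V$ is concentrated in its recession cone $\theta_q$, and standard finite-dimensional convex geometry then finishes the job.
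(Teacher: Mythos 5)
Your proof is correct and rests on the same underlying structure theory as the paper's: both reduce the hard inclusion to the classical representation of a line-free closed convex set in finite dimensions via its extreme points and its recession directions (Klee). The packaging differs in one respect worth noting. The paper quotes Klee's decomposition of $B_1^q[0]$ into the convex hull of its extreme points and extreme rays and then invokes \cite[Lemma 3.6]{Jonard Sanchez} to know that every extreme ray is parallel to a ray in $\theta_q$, after which it carries out the convex-combination bookkeeping explicitly. You instead verify directly that $V=B_1^q[0]$ is $q^s$-closed, line-free, and has recession cone exactly $\theta_q$, and then cite the combined statement $C=\conv(E(C))+\operatorname{rec}(C)$ as a black box; this makes the argument marginally more self-contained, since the fact that extreme directions lie in $\theta_q$ is subsumed by your computation $\operatorname{rec}(V)=\theta_q$ (from $q(tw)=tq(w)\le 1$ for all $t\ge 0$), at the cost of needing the recession-cone form of the representation theorem rather than the extreme-ray form --- the two are equivalent by precisely the bookkeeping the paper writes out. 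All three of your verifications ($q^s$-closedness via continuity, or lower semicontinuity, of the convex function $q$; the identification of the recession cone; and line-freeness from $\theta_q\cap(-\theta_q)=\{0\}$) are sound, as is the observation that $E(V)=E(V+\theta_q)$ because $V+\theta_q=V$.
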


\begin{proof}
Using a well-known result of Klee \cite{Klee}, we know that $B_1^q[0]$ is the convex hull of its extreme points and its extreme rays\footnote{An open half line $R=\{a+tb\mid a,b\in X, ~t> 0\}$ is called an \textit{extreme ray} of $A$ if $y,z\in R$ whenever $\lambda y+(1-\lambda)z\in R$, where  $y, z\in A$ and $\lambda\in (0,1)$. In this case, if $R$ is an extreme ray and the extreme $a$ lies in $A$, then $a$ is an extreme point of $A$.}.
This implies that $S(B_1^q[0])\subset B_1^q[0]$ and therefore
$$S(B_1^q[0])+\theta_q\subset B_1^q[0]+\theta_q=B_1^q[0].$$

In order to prove the other contention, let $x\in B_1^q[0]$. By \cite[Lemma 3.6]{Jonard Sanchez} every extreme ray $B_1^q[0]$ is parallel to some ray contained in $\theta_q$.  Then we can find
$a_1,\dots, a_n,a_{n+1},\dots, a_p\in E(B_1^q[0])$, $b_1,\dots, b_n\in\theta_q$, and scalars $\lambda_1,\dots, \lambda_p,t_1,\dots, t_n\geq 0$ such that
$$x=\sum_{i=1}^{n}\lambda_i (a_i+t_ib_i)+\sum_{i=n+1}^p\lambda_i a_i \quad \text{ and }\quad \sum_{i=1}^p\lambda_i=1.$$
Since $\theta_q$ is a convex cone, the point $\sum_{i=1}^{n}\lambda_i t_ib_i$ lies in $\theta_q$. On the other hand, we know  that $\sum_{i=1}^p\lambda_i a_i\in S(B_1^q[0])$. Thus

\begin{align*}
x&=\sum_{i=1}^{n}\lambda_i (a_i+t_ib_i)+\sum_{i=n+1}^p\lambda_i a_i\\
&=\sum_{i=1}^p\lambda_i a_i+\sum_{i=1}^{n}\lambda_i t_ib_i\\
&\in S(B_1^q[0])+\theta_q.
\end{align*}

\end{proof}

\begin{proposition}\label{l: ps bounded implies right bounded}
Let $(X,q)$ be an asymmetric normed space such that $E(B_1^q[0])$ is $q^s$-bounded. Then $q$ is right bounded.
\end{proposition}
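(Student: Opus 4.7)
The plan is to obtain the proposition as an essentially immediate corollary of the geometric decomposition provided by Lemma~\ref{l:geometry of balls}. First I would record a small observation used implicitly in the definition of $E(K)$: one has $B_1^q[0]+\theta_q=B_1^q[0]$, since whenever $q(x)\leq 1$ and $y\in\theta_q$, the subadditivity of $q$ gives $q(x+y)\leq q(x)+q(y)=q(x)\leq 1$. Consequently $E(B_1^q[0])$ is exactly the set of extreme points of the closed unit ball itself, and Lemma~\ref{l:geometry of balls} applies verbatim.

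Next I would invoke that lemma to write
$$B_1^q[0]=S(B_1^q[0])+\theta_q=\conv\bigl(E(B_1^q[0])\bigr)+\theta_q.$$
By hypothesis, there exists $M>0$ with $E(B_1^q[0])\subset M\cdot B_1^{q^s}[0]$. Since $M\cdot B_1^{q^s}[0]$ is convex (as $q^s$ is a norm), convex hulls are monotone with respect to inclusion into convex sets, so
$$\conv\bigl(E(B_1^q[0])\bigr)\subset M\cdot B_1^{q^s}[0].$$

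Combining these two inclusions and using that $\theta_q$ is a convex cone (so $M\theta_q=\theta_q$), I would conclude
$$B_1^q[0]\subset M\cdot B_1^{q^s}[0]+\theta_q=M\bigl(B_1^{q^s}[0]+\theta_q\bigr),$$
and then multiply by $1/M$ to obtain $rB_1^q[0]\subset B_1^{q^s}[0]+\theta_q$ with $r:=1/M$. This is precisely the definition of right boundedness.

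There is no serious obstacle here; all the genuinely nontrivial content has been absorbed into Lemma~\ref{l:geometry of balls}, whose proof already exploits Klee's theorem on convex sets as convex hulls of extreme points and extreme rays, together with the fact that each extreme ray of $B_1^q[0]$ is parallel to a ray in $\theta_q$. The only minor bookkeeping is the harmless rescaling by $M$ and the cone identity $M\theta_q=\theta_q$, which makes this proposition essentially a one-line corollary.
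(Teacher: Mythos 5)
Your proof is correct and follows essentially the same route as the paper: both reduce the statement to Lemma~\ref{l:geometry of balls} together with the observation that the convex hull of a $q^s$-bounded set is $q^s$-bounded. The only cosmetic difference is that the paper cites Lemma~\ref{l: right bounded iff bounded set} at the final step, whereas you carry out the rescaling by $M$ and the cone identity $M\theta_q=\theta_q$ explicitly.
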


\begin{proof}
If $E(B_1^q[0])$ is $q^s$-bounded, so is $S(B_1^q[0])$. Thus, by Lemmas~\ref{l: right bounded iff bounded set} and \ref{l:geometry of balls} we conclude that $q$ is right bounded.
\end{proof}

The converse of Proposition~\ref{l: ps bounded implies right bounded} is false. A counterexample of this situation was already shown in Example~\ref{e:main 1}, where the norm is right bounded, but the set of extreme points of $B_1^q[0]$ is not $q^s$-bounded.

However, if the norm is $1$-bounded, the situation is different as we can see in the following

\begin{proposition}
Let $(X,q)$ be a finite dimensional $1$-bounded asymmetric normed space. Then $E(B^{q}_1[0])\subset B_1^{q^s}[0]$ and therefore $E(B^{q}_1[0])$ is $q^s$-bounded.
\end{proposition}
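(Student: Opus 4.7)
The plan is to exploit the $1$-boundedness identity $B_1^q[0]=B_1^{q^s}[0]+\theta_q$ from Remark~\ref{r: 1 bounded iff balls equals}: every element of the unit ball decomposes into a symmetric part and a $\theta_q$-part, and extremality will force the $\theta_q$-part to vanish. First I would note that $B_1^q[0]+\theta_q=B_1^q[0]$ (the nontrivial inclusion follows from subadditivity together with $q(z)=0$ for $z\in\theta_q$), so that $E(B_1^q[0])$ is nothing but the set of extreme points of $B_1^q[0]$ itself in the usual sense.

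Given $x\in E(B_1^q[0])$, I would then use $1$-boundedness to decompose $x=y+z$ with $y\in B_1^{q^s}[0]$ and $z\in\theta_q$. The key step is to verify that \emph{both} $x-z$ and $x+z$ still lie in $B_1^q[0]$. For $x-z=y$ this is immediate from $q(y)\le q^s(y)\le 1$; for $x+z=y+2z$, subadditivity together with $q(z)=0$ yields $q(y+2z)\le q(y)+2q(z)=q(y)\le 1$. Writing
$$x=\tfrac{1}{2}(x+z)+\tfrac{1}{2}(x-z)$$
exhibits $x$ as a convex combination of two elements of $B_1^q[0]$; extremality then forces $x+z=x-z=x$, hence $z=0$ and $x=y\in B_1^{q^s}[0]$. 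This proves the claimed inclusion, and the $q^s$-boundedness of $E(B_1^q[0])$ is then automatic.

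The only place where one must be careful---and hence the main (mild) obstacle---is that $\theta_q$ is not symmetric: from $z\in\theta_q$ one may not conclude $-z\in\theta_q$, so one cannot simply use that $\theta_q$ acts by translations preserving the ball in both directions. The argument above sidesteps this by invoking only $q(z)=0$ and subadditivity when estimating $q(x+z)$, never the membership of $-z$ in $\theta_q$, which is why the displayed convex combination remains inside $B_1^q[0]$.
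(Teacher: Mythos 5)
Your proof is correct, but it takes a genuinely different route from the paper. The paper's proof is a two-line citation: since $q$ is $1$-bounded, $B_1^q[0]=B_1^{q^s}[0]+\theta_q$, and because $B_1^{q^s}[0]$ is compact (here is where finite dimensionality enters), Theorem~4.1 of the reference \emph{Extreme points and geometric aspects of compact convex sets in asymmetric normed spaces} applies and gives $E(B_1^q[0])\subset B_1^{q^s}[0]$ directly. You instead give a self-contained convexity argument: decompose an extreme point $x=y+z$ with $y\in B_1^{q^s}[0]$ and $z\in\theta_q$, check via subadditivity and $q(z)=0$ that both $x+z$ and $x-z=y$ lie in $B_1^q[0]$, and conclude $z=0$ from extremality of $x$ in $B_1^q[0]=B_1^q[0]+\theta_q$. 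All the steps check out, including the preliminary identification of $E(B_1^q[0])$ with the extreme points of $B_1^q[0]$ itself (the paper defines $E(K)$ as the extreme points of $K+\theta_q$), and your remark about not needing $-z\in\theta_q$ is exactly the right point of care. What your approach buys is that it uses neither compactness of $B_1^{q^s}[0]$ nor the external theorem, so the inclusion $E(B_1^q[0])\subset B_1^{q^s}[0]$ actually holds for any $1$-bounded asymmetric normed space, with finite dimensionality irrelevant to this step; the paper's version is shorter but leans on a cited result whose hypotheses force the finite-dimensional setting.
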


\begin{proof}
Since $q$ is 1-bounded, the set $B^{q}_1[0]=B_1^{q^s}[0]+\theta_0$. Since $B_1^{q^s}[0]$ is $q^s$-compact (and $q$-compact), we can use  \cite[Theorem 4.1]{Jonard Sanchez} to infer that  $E(B^{q}_1[0])\subset B_1^{q^s}[0]$ as desired.
\end{proof}

Let us finish the paper with the following open question, that is suggested by 
the strong relation between right boundedness and local compactness.

\begin{question}
Let $(X,q)$ be a finite dimensional asymmetric normed space. If $X$ is locally compact, is $q$ right bounded? In particular, if $B_1^q[0]$ is compact, is $q$ right bounded?
\end{question}

\vspace{2cm}

\noindent[Natalia Jonard-P\'erez]  Departamento de Matem\'aticas, Facultad de Ciencias,  Universidad Nacional Aut\'onoma de M\'exico. Circuito Exterior S/N, Cd. Universitaria, Colonia Copilco el Bajo, Delegaci\'on Coyoac\'an,  04510,  M\'exico D.F., M\'exico, e-mail: nat@ciencias.unam.mx

\vspace{1cm}

\medskip

\noindent[Enrique A. S\'anchez P\'erez] Instituto Universitario de Matem\'{a}tica Pura y Aplicada, Universitat Polit\`ecnica de Val\`encia, Camino de Vera s/n, 46022 Valencia, Spain, e-mail: easancpe@mat.upv.es

\end{document}